\def\frk{\frak}               
\def\mm{{\frk m}}
\def\Phi{{\frk n}}
\def\Phi{{\frk N}}
\def\opn#1#2{\def#1{\operatorname{#2}}} 
\opn\chara{char} \opn\length{\ell} \opn\pd{pd} \opn\rk{rk}
\opn\projdim{proj\,dim} \opn\injdim{inj\,dim} \opn\rank{rank}
\opn\depth{depth} \opn\sdepth{sdepth} \opn\fdepth{fdepth}
\opn\grade{grade} \opn\height{height} \opn\embdim{emb\,dim}
\opn\codim{codim}  \opn\min{min} \opn\max{max}
\opn\Tr{Tr} \opn\bigrank{big\,rank}
\opn\superheight{superheight}\opn\lcm{lcm}
\opn\trdeg{tr\,deg}
\opn\reg{reg} \opn\lreg{lreg} \opn\ini{in} \opn\lpd{lpd}
\opn\size{size}
\opn\div{div} \opn\Div{Div} \opn\cl{cl} \opn\Cl{Cl}
\opn\Spec{Spec} \opn\Supp{Supp} \opn\supp{supp} \opn\Sing{Sing}
\opn\Ass{Ass} \opn\Min{Min}
\opn\Ann{Ann} \opn\Rad{Rad} \opn\Soc{Soc}
\opn\Im{Im} \opn\Ker{Ker} \opn\Coker{Coker} \opn\Am{Am}
\opn\Hom{Hom} \opn\Tor{Tor} \opn\Ext{Ext} \opn\End{End}
\opn\Aut{Aut} \opn\id{id}  \opn\deg{deg}
\opn\nat{nat}
\opn\pff{pf}
\opn\Pf{Pf} \opn\GL{GL} \opn\SL{SL} \opn\mod{mod} \opn\ord{ord}
\opn\Gin{Gin} \opn\Hilb{Hilb}
\opn\aff{aff} \opn\con{conv} \opn\relint{relint} \opn\st{st}
\opn\lk{lk} \opn\cn{cn} \opn\core{core} \opn\vol{vol}
\opn\link{link} \opn\star{star}
\opn\gr{gr}
\def\pot#1#2{#1[\kern-0.28ex[#2]\kern-0.28ex]}
\opn\dirlim{\underrightarrow{\lim}}
\opn\inivlim{\underleftarrow{\lim}}
\let\to=\rightarrow
\def\Implies{\ifmmode\Longrightarrow \else
        \unskip${}\Longrightarrow{}$\ignorespaces\fi}
\def\implies{\ifmmode\Rightarrow \else
        \unskip${}\Rightarrow{}$\ignorespaces\fi}
\def\iff{\ifmmode\Longleftrightarrow \else
        \unskip${}\Longleftrightarrow{}$\ignorespaces\fi}
\newtheorem{Theorem}{Theorem}[]
\newtheorem{Lemma}[Theorem]{Lemma}
\newtheorem{Corollary}[Theorem]{Corollary}
\newtheorem{Proposition}[Theorem]{Proposition}
\newtheorem{Remark}[Theorem]{Remark}
\let\epsilon\varepsilon
\let\phi=\varphi
\let\kappa=\varkappa
\def\qed{\ifhmode\textqed\fi
      \ifmmode\ifinner\quad\qedsymbol\else\dispqed\fi\fi}
\def\textqed{\unskip\nobreak\penalty50
       \hskip2em\hbox{}\nobreak\hfil\qedsymbol
       \parfillskip=0pt \finalhyphendemerits=0}
\def\dispqed{\rlap{\qquad\qedsymbol}}
\opn\dis{dis}
\def\pnt{{\raise0.5mm\hbox{\large\bf.}}}
\opn\Lex{Lex}
\begin{document}
\title{  Nested Artin approximation }

\author{ Dorin Popescu }
\thanks{The  support from the the project  ID-PCE-2011-1023, granted by the Romanian National Authority for Scientific Research, CNCS - UEFISCDI   is gratefully acknowledged. }

\address{Dorin Popescu, Simion Stoilow Institute of Mathematics of the Romanian Academy, Research unit 5,
University of Bucharest, P.O.Box 1-764, Bucharest 014700, Romania}
\email{dorin.popescu@imar.ro}

\maketitle

\begin{abstract} A short proof of the linear nested Artin approximation property of the algebraic power series rings is given here.

 \noindent
  {\it Key words } : Henselian rings,  Algebraic power series rings, Nested Artin approximation property\\
 {\it 2010 Mathematics Subject Classification: Primary 13B40, Secondary 14B25,13J15, 14B12.}
\end{abstract}

\vskip 0.5 cm

\section*{Introduction}

The solution of an old problem (see for instance \cite{K}), the
so-called  nested Artin approximation property, is given in the following theorem.
\begin{Theorem}[\cite{P}, {\cite[Theorem 3.6]{P1}}] \label{nes}
 Let $k$ be a field,  $ A=k\langle x\rangle$, $x=(x_1,\ldots,x_n)$ the algebraic power series over $k$,  $f=(f_1,\ldots,f_s)\in A[Y]^s$, $Y=(Y_1,\ldots,Y_m)$  and  $0<r_1\leq \ldots \leq r_m\leq n$, $c$ be some non-negative integers. Suppose that $f$ has a solution ${\hat y}=({\hat y}_1,\ldots,{\hat y}_m)$ in   $ k[[x]] $ such that ${\hat y}_i\in k[[x_1,\ldots,x_{r_i}]]$ for all $1\leq i\leq m$ (that is a so-called nested formal solution). Then there exists a solution $y=(y_1,\ldots,y_m)$  of $f$ in $A$ such that $y_i\in k\langle x_1,\ldots,x_{r_i}\rangle$ for all $1\leq i\leq m$ and $y\equiv{\hat y} \ \ \mbox{mod}\ \ (x)^ck[[x]]$.
\end{Theorem}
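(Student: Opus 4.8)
The plan is to deduce the general statement from its \emph{linear} analogue, the main tool being General N\'eron Desingularization together with the fact that $A=k\langle x\rangle$ is an excellent Henselian local ring whose completion is $\hat A=k[[x]]$; in particular each inclusion $A_i:=k\langle x_1,\ldots,x_{r_i}\rangle\hookrightarrow A$ is faithfully flat and each map $A_i\to k[[x_1,\ldots,x_{r_i}]]$ is regular. First I would fix a large integer $N\geq c$ and replace $\hat y$ by a nested \emph{polynomial} approximation $y^{(0)}=(y^{(0)}_1,\ldots,y^{(0)}_m)$ with $y^{(0)}_i\in k[x_1,\ldots,x_{r_i}]$ and $y^{(0)}\equiv\hat y\bmod(x)^N$; then $f(x,y^{(0)})\in(x)^N A^s$ is small while the nesting is already built in.

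Next comes the reduction to the linear case. Substituting $Y=y^{(0)}+Z$ and Taylor expanding gives $f(x,y^{(0)}+Z)=f(x,y^{(0)})+J(x)\,Z+(\text{terms of order}\geq 2\text{ in }Z)$, where $J=(\partial f/\partial Y)(x,y^{(0)})$, and the nested series $\hat z:=\hat y-y^{(0)}\in(x)^N k[[x]]^m$ is a nested formal solution of this perturbed system. The idea is to factor the map $A[Y]/(f)\to\hat A$ determined by $\hat y$ through a \emph{smooth} finite type $A$-algebra $C=A[T]/(h)$ via General N\'eron Desingularization; smoothness means a suitable Jacobian minor of $h$ is a unit at the relevant point, so over the Henselian ring $A$ the equations $h=0$ can be solved exactly near the formal point once the analogous \emph{linear} (Jacobian) system is solved. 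In this way the existence of a nested algebraic solution of $f=0$ is reduced to the existence of a nested algebraic solution of a system of \emph{linear} equations over $A$ admitting a nested formal solution agreeing with it mod $(x)^c$.

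It then remains to establish the linear nested approximation: given $a\in A^{s\times m}$, $b\in A^s$ and a nested formal solution of $a\,Y=b$, produce a nested algebraic one. I would prove this by induction on $n$. Splitting the unknowns according to whether $r_j=n$ or $r_j<n$, the components at levels $<n$ lie in $k[[x_1,\ldots,x_{n-1}]]$ and do not involve $x_n$; using the faithful flatness of $A_{n-1}:=k\langle x_1,\ldots,x_{n-1}\rangle\hookrightarrow A$, the Artin--Rees lemma to control the $(x)^c$-congruence, and the regularity of $A_{n-1}\to k[[x_1,\ldots,x_{n-1}]]$, I would descend the system one variable at a time to the same statement in $n-1$ variables, where the inductive hypothesis applies; the base case $n=r_1$ is immediate.

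The main obstacle is ensuring that every step respects the nesting. Dropping the nesting constraints, the statement is exactly ordinary Artin approximation, which General N\'eron Desingularization settles at once; the entire difficulty is that the smooth algebra $C$, the Jacobian minor, and the solution of the resulting linear system must all be chosen compatibly with the tower $A_1\subseteq\cdots\subseteq A_m\subseteq A$. Concretely, I expect the hard point to be a \emph{nested} form of N\'eron desingularization---arranging the smoothing so that the variables entering each component are confined to the prescribed $x_1,\ldots,x_{r_i}$---so that the passage to the linear system preserves the filtration; once that is in place, the linear step and the inductive descent are comparatively routine.
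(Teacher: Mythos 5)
Your plan has two genuine gaps, and the first one is the entire content of the theorem. The note only proves the \emph{linear} case of Theorem \ref{nes} (Corollary \ref{lnes}); the general statement is quoted from \cite{P}, \cite{P1}, where it is proved not by linearization but by Kurke's device of working over the mixed rings $k[[x_1,\ldots,x_{r_i}]]\langle x_{r_i+1},\ldots,x_n\rangle$ and invoking the Artin approximation property of rings of type $k[[u]]\langle x\rangle$. Your proposed reduction of the nonlinear case to the linear case rests on what you yourself call a ``nested form of N\'eron desingularization'' --- choosing the smooth factorization $A[Y]/(f)\to C\to \hat A$ compatibly with the tower $A_1\subseteq\cdots\subseteq A_m\subseteq A$. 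You do not supply this, and it is precisely the difficulty the Kurke/mixed-ring approach is designed to bypass; deferring it as ``the hard point'' leaves the argument without its central step. (In addition, the claim that solving the linearized system $J(x)Z\equiv -f(x,y^{(0)})$ suffices to solve $f=0$ requires Tougeron's implicit function lemma with control of the Jacobian ideal, and a nested version of that lemma is again not free.)

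Second, even the linear case is not settled by your sketch. ``Descending one variable at a time using faithful flatness, Artin--Rees and regularity'' does not address the actual obstruction, which is an elimination statement: if $\hat v\in k[[x]]^p$ lies in the $k[[x,y]]$-span of a finitely generated module $M\subset k\langle x,y\rangle^p$, one must show $\hat v$ lies in the closure of $k[[x]](M\cap k\langle x\rangle^p)$. The paper's proof of this (Proposition \ref{p}) needs a structure lemma for elements of $A[[u]]\langle v\rangle$ (Lemma \ref{am}), flatness of $k[[x]]\langle y\rangle\subset k[[x,y]]$, Lemma \ref{la}, and the conversion of the membership condition into a system of polynomial equations over $k\langle x\rangle$ that is finite by Noetherianity and hence amenable to classical Artin approximation; the nesting is then handled by induction on the number of levels $e$ (Proposition \ref{p1}), not on $n$. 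None of these ingredients appears in your outline, so the linear step cannot be called ``comparatively routine.''
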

The proof relies on an idea of Kurke from 1972 and the Artin approximation property of rings of type $k[[u]]\langle x\rangle$ (see \cite{P}, \cite{S}, \cite{P1}). When $f$ is linear, interesting relations with other problems and a description of many results on this topic are nicely explained in \cite{Rond1}.
Also a proof of the above theorem in the linear case
 can be found in \cite{Rond1} using just the classical Artin approximation property of $A$ (see \cite{A}). Unfortunately, we had some difficulties in reading  \cite{Rond1}, but finally we  noticed a shorter proof using  mainly the same ideas. This proof is the content of the present note.
 
 We owe thanks to G. Rond who noticed a gap in a previous version of this  note.

\section{Linear nested Artin approximation  property}

We start recalling   \cite[Lemma 9.2]{Rond} with  a simplified proof.

\begin{Lemma}\label{am} Let $(A,\mm)$ be a complete normal local domain, $u=(u_1,\dots,u_n)$, $v=(v_1,\ldots,v_m)$, $B=A[[u]]\langle v\rangle $ be the algebraic closure of $A[[u]][v]$ in $A[[u,v]]$ and $f\in B$. Then there exists $g$ in the algebraic closure $A\langle v, Z\rangle $ of $A[v,Z]$, $Z=(Z_1,\ldots,Z_s)$ in $A[[v,Z]]$
 for some $s\in \bf N$ and ${\hat z}\in A[[u]]^s$ such that $f=g(\hat z)$.
 \end{Lemma}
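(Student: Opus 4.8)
The plan is to separate the roles of the formal variables $u$ and the ``algebraic'' variables $v$ by promoting the power-series coefficients that carry the $u$-dependence to genuine new variables $Z$. Concretely, since $f\in B$ is algebraic over $A[[u]][v]$, it lies in the Henselization of $A[[u]][v]$ at its maximal ideal, and hence admits an \emph{\'etale presentation}: there is a monic polynomial $p\in A[[u]][v][Y]$ with $p(f)=0$ and $p'(f)$ a unit of $A[[u,v]]$ (equivalently, by Artin--Mazur, $f$ is one coordinate of a solution of a polynomial system $T_i=H_i(v,u,T)$, $H_i\in A[[u]][v][T]$, whose Jacobian $I-(\partial H_i/\partial T_j)$ is invertible at the origin). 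I will phrase the argument with the single monic $p$; the system version is identical.

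The coefficients of $p$, viewed as a polynomial in $v$ and $Y$, are finitely many elements of $A[[u]]$. Writing each such coefficient as (its constant term in $A$) $+$ (a series in $(u)A[[u]]$), I collect the latter parts into a tuple $\hat z=(\hat z_1,\dots,\hat z_s)\in\big((u)A[[u]]\big)^s$ and introduce variables $Z=(Z_1,\dots,Z_s)$. Replacing each coefficient series $\hat z_j$ by $Z_j$ (and keeping the constant terms as elements of $A$) lifts $p$ to a monic polynomial $q\in A[v,Z][Y]$ with $q(v,\hat z,Y)=p(v,u,Y)$. Because $\hat z_j\in (u)$, the assignment $Z_j\mapsto \hat z_j$, $v\mapsto v$ extends to a continuous $A$-algebra homomorphism $\phi\colon A[[v,Z]]\to A[[u,v]]$, and $\phi$ carries $A\langle v,Z\rangle$ into $B$.

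Now I solve $q(v,Z,Y)=0$ inside $A\langle v,Z\rangle$. At the origin $v=Z=0$ one has $q(0,0,Y)=p(0,0,Y)$, and the constant term $c\in A$ of $f$ satisfies $p(0,0,c)=0$ with $p'(0,0,c)$ a unit of $A$ (these are the $u=v=0$ evaluations of $p(f)=0$ and of the unit $p'(f)$). Thus $c$ is a simple root of $q(0,0,Y)$, and since $A\langle v,Z\rangle$ is Henselian, Hensel's lemma produces a unique $g\in A\langle v,Z\rangle$ with $q(v,Z,g)=0$ and $g\equiv c$ modulo the maximal ideal. Applying $\phi$ gives $p(v,u,\phi(g))=0$ with $\phi(g)\equiv c$; but $f$ is the \emph{unique} such root in the complete local ring $A[[u,v]]$ (again by Hensel, using that $p'(f)$ is a unit). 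Hence $\phi(g)=f$, that is $f=g(\hat z)$, as required; the constant term of $f$ is absorbed by translating $Y$ and adding $c\in A\subseteq A\langle v,Z\rangle$ back at the end.

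The main obstacle is the very first step: securing the \'etale (simple-root, invertible-Jacobian) presentation of $f$ over the complete local base $A[[u]]$, since an arbitrary algebraic relation need not exhibit $f$ as a simple root. This is exactly where one uses that $A[[u]]\langle v\rangle$ is the Henselization of $A[[u]][v]$ (so that every element comes from a standard \'etale neighbourhood) together with the completeness and normality of $A$; in positive characteristic one must also invoke the separability built into the algebraic-power-series ring. The remaining points needing care --- that $\phi$ is well defined and continuous (which forces $\hat z_j\in (u)$), and that the simple-root/Jacobian condition is inherited at the origin after the substitution $Z=\hat z$ --- are then routine, and the passage from $g$ to $f$ is pure Henselian uniqueness transported along $\phi$.
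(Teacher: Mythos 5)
Your proposal is correct and follows essentially the same route as the paper: obtain a monic polynomial $F$ over $A[[u]][v]$ with $F(f)=0$ and $F'(f)$ a unit (via the identification of $B$ with the Henselization of $A[[u]][v]_{(\mm,u,v)}$), replace the $(u)$-parts of its finitely many coefficients in $A[[u]]$ by new variables $Z$ while recording them as $\hat z\in((u)A[[u]])^s$, solve the lifted equation in $A\langle v,Z\rangle$ by Hensel's lemma, and recover $f$ by specializing $Z\mapsto\hat z$ and invoking uniqueness of the simple root in $A[[u,v]]$. The only cosmetic difference is your normalization by the constant term $c=f(0,0)$ versus the paper's reduction to $f\in(u)$ by subtracting $f(u=0)$; both serve the same purpose in the Hensel uniqueness step.
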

 \begin{proof} Changing $f$ by $f-f(u=0)$ we may assume that $f\in (u)$. Note that $B$ is the Henselization of $C=A[[u]][v]_{(\mm,u,v)}$  by \cite{Ra} and so there exists some etale neighborhood of $C$ containing $f$. Using for example \cite[Theorem 2.5]{S} there exists a monic polynomial $F$ in $X$ over $A[[u]][v]$ such that $F(f)=0$ and $F'(f)\not \in (\mm,u,v)$, let us say $F=\sum_{i,j}F_{ij}v^iX^j$ for some $F_{ij}\in A[[u]]$.

 Set ${\hat z}_{ij}=F_{ij}-F_{ij}(u=0)\in (u)A[[u]]$, ${\hat z}=({\hat z}_{ij})$ and $G=\sum_{ij}(F_{ij}(u=0)+Z_{ij})v^iX^j$ for some new variables $Z=(Z_{ij})$. We  have $G({\hat z})=F$. Set $G'=\partial G/\partial X$. As
 $$G(Z=0)=G({\hat z}(u=0))\equiv F(f)=0\ \mbox{ modulo}\ (u),$$
  $$G'(Z=0)=G'({\hat z}(u=0))\equiv F'(f)\not \equiv 0 \ \mbox{modulo}\ (\mm,u,v)$$
   we get $G(X=0)\equiv 0$, $G'(X=0)\not \equiv 0$ modulo $(\mm,v,Z)$. By the Implicit Function Theorem there exists $g\in (\mm,v,Z) A\langle v,Z\rangle$
such that $G(g)=0$. It follows that $G(g({\hat z}))=0$. But $F=G({\hat z})=0$ has just a solution $X=f$ in $(\mm,u,v)B$ by the Implicit Function Theorem and so $f=g({\hat z})$.
\hfill\ \end{proof}
\begin{Lemma} \label{la} Let $(A,\mm)$ be a Noetherian local ring, $f\in A[U]$, $U=(U_1,\ldots,U_s)$ a linear system of polynomial equations, $c\in \bf N$ and ${\hat u}$ a solution of $f$ in the completion $\hat A$ of $A$. Then there exists a solution $u\in A^s$ of $f$ such that $u\equiv {\hat u}\ \mbox{modulo}\ \mm^c\hat A$.
\end{Lemma}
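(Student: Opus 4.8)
The plan is to exploit linearity, which turns the problem into a statement about finitely generated modules and the flatness of completion. Write the linear system as $f(U)=MU-b$ with a matrix $M\in A^{p\times s}$ and a vector $b\in A^p$, so that solving $f=0$ over any ring $R\supseteq A$ amounts to solving $MU=b$ in $R^s$. Recall that $\hat A$ is flat, indeed faithfully flat, over the Noetherian local ring $A$.

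First I would establish that a solution exists in $A$ at all. The hypothesis gives $b=M\hat u\in \Im(M\colon \hat A^s\to\hat A^p)$. Tensoring the exact sequence $A^s\xrightarrow{M}A^p\to A^p/\Im(M)\to 0$ with the flat algebra $\hat A$ identifies $\Im(M\colon\hat A^s\to\hat A^p)$ with $\Im(M)\hat A$. Faithful flatness then yields $\Im(M)\hat A\cap A^p=\Im(M)$, and since $b\in A^p$ we conclude $b\in\Im(M)$; choose $u_0\in A^s$ with $Mu_0=b$. Next, observe that the two solution sets are cosets of kernels. Put $K=\Ker(M\colon A^s\to A^p)$. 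Applying the exact functor $-\otimes_A\hat A$ to $0\to K\to A^s\xrightarrow{M}A^p$ identifies $\Ker(M\colon\hat A^s\to\hat A^p)$ with $K\hat A$, which, as $K$ is finitely generated over the Noetherian ring $A$, is the $\mm$-adic completion $\hat K$ of $K$. Since both $\hat u$ and $u_0$ solve $MU=b$, the difference $w:=\hat u-u_0$ lies in $\hat K$.

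Finally, the desired congruence follows from the density of a finitely generated module in its completion. The canonical map $K/\mm^cK\to\hat K/\mm^c\hat K$ is an isomorphism, so there exists $\kappa\in K$ with $w-\kappa\in\mm^c\hat K\subseteq\mm^c\hat A^s$. Setting $u:=u_0+\kappa\in A^s$ gives $Mu=Mu_0=b$, i.e. $f(u)=0$, while $u-\hat u=\kappa-w\in\mm^c\hat A^s$, so $u\equiv\hat u\bmod\mm^c\hat A$, as required. The one point that demands care, and the main obstacle, is the compatible identification of $\Ker(M)$ over $\hat A$ with $K\hat A$ and of $\hat K$ with $K\otimes_A\hat A$; both rest on the flatness of completion together with finite generation over $A$, after which the $\mm^c$-approximation is immediate from $\mm^c\hat K\subseteq\mm^c\hat A^s$.
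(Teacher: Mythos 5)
Your proof is correct, but it takes a genuinely different route from the paper. You reduce everything to the faithful flatness of $\hat A$ over the Noetherian local ring $A$: writing $f(U)=MU-b$, you get a solution $u_0\in A^s$ from $\Im(M)\hat A\cap A^p=\Im(M)$, identify the solution set over $\hat A$ as $u_0+K\hat A$ with $K=\Ker(M\colon A^s\to A^p)$ via flat base change, and then approximate $\hat u-u_0\in K\otimes_A\hat A\iso \hat K$ by an element of $K$ using $K/\mm^cK\iso\hat K/\mm^c\hat K$; each step is standard and the congruence $\mm^c\hat K\subseteq\mm^c\hat A^s$ closes the argument. The paper instead sets $B=A[U]/(f)$ and invokes \cite[Lemma 4.2]{P} (or \cite[Proposition 36]{P2}) to factor the map $B\to\hat A$ through a polynomial algebra $A[Z]$, after which one only has to approximate the images of the $Z$'s by elements of $A$; this is the linear shadow of a smoothing (General N\'eron Desingularization) argument. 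Your version is more elementary and self-contained, relying only on textbook properties of completion, whereas the paper's is shorter on the page because it outsources the work to an existing factorization lemma whose method also covers non-linear situations. Both are valid proofs of the lemma as stated.
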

\begin{proof} Let $B=A[U]/(f)$ and $h:B\to {\hat A}$ be the map given by $U\to {\hat u}$. By \cite[Lemma 4.2]{P} (or \cite[Proposition 36]{P2}) $h$ factors through a polynomial algebra $A[Z]$, $Z=(Z_1,\ldots,Z_s)$, let us say $h$ is the composite map $B\xrightarrow{t} A[Z]\xrightarrow{g} {\hat A}$. Choose $ z\in A^s$ such that $z\equiv g(Z)\ \mbox{modulo} \ \mm^c\hat A$. Then $u=t(cls\ U)(z)$ is a solution of $f$ in $A$ such that $u\equiv {\hat u}\ \mbox{modulo}\ \mm^c\hat A$.
\hfill\ \end{proof}

\begin{Proposition}  \label{p} Let $k\langle x,y\rangle$, $x=(x_1,\ldots,x_n)$, $y=(y_1,\ldots,y_m)$ be the ring of algebraic power series in $x,y$ over a field $k$ and $M\subset k\langle x,y\rangle^p$ a finitely generated $k\langle x,y\rangle$-submodule. Then
$$ k[[x]] (M\cap k\langle x\rangle^p)=(k[[x,y]]M)\cap k[[x]]^p,$$
equivalently $M\cap k\langle x\rangle^p$ is dense in $(k[[x,y]]M)\cap k[[x]]^p$, that is for all ${\hat v}\in (k[[x,y]]M)\\
\cap k[[x]]^p$ and  $c\in \bf N$ there exists
$v_c\in (M\cap k\langle x\rangle)^p$ such that $v_c\equiv {\hat v}\ \mbox{modulo}\ (x)^ck[[x]]^p$.
 Moreover, if $c\in \bf N$ and   ${\hat v}=\sum_{i=1}^t {\hat u}_i a_i$  for some $a_i\in M$, ${\hat u}_i\in k[[x,y]]$ then there exist $ u_{ic}\in k\langle x,y\rangle$ such that
   $ u_{ic}\equiv {\hat u}_i\  \mbox{modulo} \ (x)^ck[[x,y]] $, $v_c=\sum_{i=1}^t  u_{ic}a_i\in (M\cap k\langle x\rangle^p) $ and  $\hat v$ is the limit of  $(v_c)_c$ in the $(x)$-adic topology.
\end{Proposition}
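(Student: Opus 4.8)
The plan is to deduce the whole statement from a single ``hard inclusion'' and to treat the density and the ``moreover'' part as formal consequences. First I would record the trivial inclusion $k[[x]](M\cap k\langle x\rangle^p)\subseteq (k[[x,y]]M)\cap k[[x]]^p$: any $v\in M\cap k\langle x\rangle^p$ already lies in $M\subseteq k[[x,y]]M$ and in $k[[x]]^p$, and both $k[[x,y]]M$ and $k[[x]]^p$ are stable under multiplication by $k[[x]]$. Writing $N:=M\cap k\langle x\rangle^p$ (a finitely generated $k\langle x\rangle$-module, as $k\langle x\rangle$ is Noetherian) and $N':=(k[[x,y]]M)\cap k[[x]]^p$, everything reduces to proving $N'\subseteq k[[x]]N$. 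Indeed, once $N'=k[[x]]N$ is known, density follows by truncation: choosing generators $d_1,\dots,d_e$ of $N$ and writing a given $\hat v=\sum_j \hat U_j d_j$ with $\hat U_j\in k[[x]]$, the elements $v_c:=\sum_j U_j^{(c)}d_j$ with $U_j^{(c)}\in k\langle x\rangle$, $U_j^{(c)}\equiv \hat U_j \bmod (x)^c$, lie in $N$ and satisfy $v_c\equiv\hat v\bmod (x)^c k[[x]]^p$. The ``moreover'' part is then obtained by applying Lemma~\ref{la} to the linear system $\sum_i U_i a_i=v_c$ over $k\langle x,y\rangle$: since $v_c\in M$ and $v_c\equiv\hat v=\sum_i\hat u_i a_i\bmod (x)^c$, the tuple $\hat u$ is a formal solution modulo $(x)^c$ that can be corrected to an exact one, and Lemma~\ref{la} produces algebraic $u_{ic}\in k\langle x,y\rangle$ with $u_{ic}\equiv \hat u_i \bmod (x)^c k[[x,y]]$ and $\sum_i u_{ic}a_i=v_c$.

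Next I would reformulate the hard inclusion homologically, which is where the real content sits. Set $Q:=k\langle x,y\rangle^p/M$, regarded as a $k\langle x\rangle$-module by restriction, and let $\alpha\colon k\langle x\rangle^p\to Q$ be the composite $k\langle x\rangle^p\hookrightarrow k\langle x,y\rangle^p\to Q$, so that $N=\Ker\alpha$. Because the completion $k\langle x\rangle\to k[[x]]$ is faithfully flat, applying $k[[x]]\otimes_{k\langle x\rangle}(-)$ to $0\to N\to k\langle x\rangle^p\xrightarrow{\alpha}Q$ yields $k[[x]]N=\Ker\big(k[[x]]^p\xrightarrow{\beta}k[[x]]\otimes_{k\langle x\rangle}Q\big)$. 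On the other hand $N'=\Ker\big(k[[x]]^p\xrightarrow{\hat\alpha}k[[x,y]]\otimes_{k\langle x,y\rangle}Q\big)$, and $\hat\alpha=\gamma\circ\beta$ for the canonical map $\gamma\colon k[[x]]\otimes_{k\langle x\rangle}Q\to k[[x,y]]\otimes_{k\langle x,y\rangle}Q$. Thus $k[[x]]N=\Ker\beta\subseteq\Ker\hat\alpha=N'$ is the easy inclusion once more, and the hard inclusion $N'\subseteq k[[x]]N$ is exactly the assertion that $\gamma$ is injective on $\beta(k[[x]]^p)$; it suffices to show $\gamma$ itself is injective.

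To prove this injectivity I would use Lemma~\ref{am} to remove the mixing of the variables in the generators. Applying Lemma~\ref{am} with $A=k$, $u=x$, $v=y$ (so that $B=k[[x]]\langle y\rangle\supseteq k\langle x,y\rangle$) to the components of a finite generating set $a_1,\dots,a_t$ of $M$, I obtain $s\in{\bf N}$, elements $g_i\in k\langle y,Z\rangle^p$ with $Z=(Z_1,\dots,Z_s)$, and $\hat z\in (x)k[[x]]^s$ such that $a_i=g_i(y,\hat z(x))$ for all $i$. The substitution $Z\mapsto\hat z$ is a continuous $k$-algebra homomorphism $\sigma\colon k\langle y,Z\rangle\to k[[x,y]]$ carrying $g_i$ to $a_i$, and the $y$-free part $N_0:=\big(\sum_i k\langle y,Z\rangle g_i\big)\cap k\langle Z\rangle^p$ is now an intersection inside a purely algebraic power series ring, accessible to classical Artin approximation. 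The plan is to show that $\gamma$ factors through the specialization by $\sigma$ and that every element killed by $\gamma$ comes from a syzygy of the $g_i$ already defined over $k\langle y,Z\rangle$: one solves, over $k\langle y,Z\rangle$, the linear membership and syzygy equations whose formal solution is furnished by the given relation $\sum_i\hat u_i a_i=\hat v$, descends these solutions to $k\langle y,Z\rangle$ by Lemma~\ref{la}, and only then specializes by $\sigma$.

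The main obstacle is precisely the interaction between the formal specialization $Z\mapsto\hat z$ and algebraicity. One cannot simply approximate $\hat z$ by an algebraic $z\in(x)k\langle x\rangle^s$: doing so replaces each generator $a_i=g_i(\hat z)$ by $g_i(z)\neq a_i$ and hence replaces $M$ by a different module, destroying the exact membership $v_c\in M$ that the Proposition demands. Consequently the approximation must be carried out entirely in the coefficient (syzygy) layer over $k\langle y,Z\rangle$, keeping $\hat z$---and therefore the $a_i$---untouched, and the separation supplied by Lemma~\ref{am} is what turns the nested constraint ``$\sum_i u_i a_i$ is $y$-free'' into a plain system over the algebraic ring $k\langle y,Z\rangle$ to which Lemma~\ref{la} applies. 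Verifying that the descended, specialized solutions indeed span $N'$ over $k[[x]]$---equivalently, that $\gamma$ is injective---is the delicate point on which the whole argument turns.
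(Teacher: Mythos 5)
There is a genuine gap: your argument stops exactly where the real work begins. You reduce everything to the inclusion $N'\subseteq k[[x]]N$ (equivalently, injectivity of $\gamma$ on $\beta(k[[x]]^p)$), and then you yourself describe the verification of this as ``the delicate point on which the whole argument turns'' without carrying it out. The homological reformulation is only a restatement of the Proposition, not progress toward it, and the plan of ``solving the membership and syzygy equations over $k\langle y,Z\rangle$ and then specializing by $\sigma$'' is never made precise enough to check. Moreover, your use of Lemma~\ref{am} is misdirected: you apply it to the generators $a_i$, which are already in $k\langle x,y\rangle$ and gain nothing from it, and this is precisely what creates the obstacle you then describe (approximating $\hat z$ would deform the $a_i$ and hence the module $M$).

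The paper's proof avoids this obstacle by applying Lemma~\ref{am} to the \emph{coefficients}, not the generators. Given $\hat v=\sum_i\hat u_ia_i$ with $\hat u_i\in k[[x,y]]$, one first uses flatness of $k[[x]]\langle y\rangle\subset k[[x,y]]$ (together with Lemma~\ref{la} to keep the congruence mod $(x)^c$) to replace the $\hat u_i$ by $\tilde u_i\in k[[x]]\langle y\rangle$ with $\hat v=\sum_i\tilde u_ia_i$; then Lemma~\ref{am} writes $\tilde u_i=g_i(\hat z)$ with $g_i\in k\langle y,Z\rangle$ and $\hat z\in k[[x]]^s$. Expanding $a_i=\sum_r a_{ir}y^r$ and $g_i=\sum_r g_{ir}y^r$, the condition that $\sum_i g_i(Z)a_i$ equal $V\in k[[x]]^p$ becomes an infinite system of polynomial equations in $(V,Z)$ over $k\langle x\rangle$, of which finitely many suffice by Noetherianity; classical Artin approximation for $k\langle x\rangle$ then yields $(v_c,z_c)$ algebraic and congruent to $(\hat v,\hat z)$ mod $(x)^c$. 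The key point you are missing is that $v_c=\sum_i g_i(z_c)a_i$ lies in $M$ \emph{automatically}, because $g_i(z_c)\in k\langle x,y\rangle$ and the $a_i$ are untouched --- only the coefficients move under the approximation. This also produces the $u_{ic}=g_i(z_c)$ of the ``moreover'' part directly, whereas your attempt to recover them afterwards via Lemma~\ref{la} applied to $\sum_iU_ia_i=v_c$ needs an exact formal solution of that system congruent to $\hat u$ mod $(x)^c$, which you assert but do not construct.
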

\begin{proof}  Let  ${\hat v}\in (k[[x,y]]M)\cap k[[x]]^p$, let us say
${\hat v}=\sum_{i=1}^t {\hat u}_ia_i$ for some $a_i\in M$ and  ${\hat u}_i\in k[[x,y]]^p$.
By flatness of $k[[x]]\langle y\rangle\subset k[[x,y]]$ we see that there exist ${\tilde u}_i\in k[[x]]\langle y\rangle$ such that ${\hat v}=\sum_{i=1}^t {\tilde u}_ia_i$. Moreover by Lemma \ref{la} we may choose ${\tilde u}_i$ such that ${\tilde u}_i\equiv {\hat u}_i\ \mbox{modulo} \ (x)^ck[[x,y]]$.
Then using Lemma \ref{am} there exist $g_i\in k\langle y,Z\rangle$, $i\in [t]$ for some variables $Z=(Z_1,\ldots,Z_s)$ and ${\hat z}\in k[[x]]^s$ such that ${\tilde u}_i=g_i({\hat z})$. Note that $a_i=\sum_{r\in {\bf N}^m}  a_{ir}y^r$, $g_i=\sum_{r\in {\bf N}^m}  g_{ir}y^r$ with $a_{ir}\in k\langle x\rangle^p$, $g_{ir}\in k\langle Z\rangle$.

Clearly, ${\hat z}, {\hat v}$ is  a solution  in $k[[x]]$  of the system of polynomial equations $ V=\sum_{i=1}^t a_ig_i(Z) $, $V=(V_1,\ldots, V_p)$  if and only if it is a solution of the infinite system of polynomial equations

($*$) $V=\sum_{i=1}^t a_{i0}g_{i0}(Z)$, \ \ \ $\sum_{i=1}^t \sum_{r+r'=e}a_{ir}g_{ir'}(Z)=0$, $e\in {\bf N}^m$, $e\not =0.$

Since $k\langle x,Z,V\rangle$ is Noetherian we see that it is enough to consider in ($*$) only a finite set of  equations, let us say  with $e\leq \omega$ for some $\omega$ high enough. Applying the Artin approximation property of $k\langle x\rangle$ (see \cite{A}) we can find for any $c\in \bf N$ a solution $v_c\in k\langle x\rangle^p$, $z_c\in k\langle x\rangle^s$ of ($*$) such that $v_c\equiv {\hat v}\ \mbox{modulo}\ (x)^ck[[x]]^p$, $z_c\equiv {\hat z}\ \mbox{modulo}\ (x)^ck[[x]]^s$. Then $v_c=\sum_{i=1}^t a_ig_i(z_c)\in M\cap k\langle x\rangle^p $, and $u_{ic}=g_i(z_{ic})\in k\langle x,y\rangle$ satisfies $u_{ic}\equiv {\tilde u}_i\equiv {\hat u}_i\ \mbox{modulo}\ (x)^ck[[x,y]] $.  Clearly     $\hat v$ is the limit of $(v_c)_c$  in the $(x)$-adic topology   and belongs to $k[[x]](M\cap k\langle x\rangle^p)$.
\hfill\ \end{proof}

\begin{Remark}{\em  When $p=1$ then the above $M$ is an ideal and we get the so-called (see \cite{Rond1}) strong elimination property of the algebraic power series.}
\end{Remark}
The following proposition is partially contained in \cite[Lemma 4.2]{Rond1}.
\begin{Proposition} \label{p1} Let $M\subset k\langle x\rangle^p$ be a finitely generated $k\langle x\rangle$-submodule and $1\leq r_1< \ldots < r_e\leq n$, $p_1,\ldots,p_e$ be some positive integers such that $p=p_1+\ldots +p_e$. Then
$$T=M\cap (k\langle x_1,\ldots,x_{r_1}\rangle^{p_1}\times \ldots \times k\langle x_1,\ldots,x_{r_e}\rangle^{p_e})$$
is dense in
$${\hat T}=(k[[x]]M)\cap (k[[x_1,\ldots,x_{r_1}]]^{p_1}\times \ldots \times k[[ x_1,\ldots,x_{r_e}]]^{p_e}).$$
 Moreover, if $c\in \bf N$ and   ${\hat v}=\sum_{i=1}^t {\hat u}_i a_i\in {\hat T}$  for some $a_i\in M$, ${\hat u}_i\in k[[x]]$ then there exist $u_{ic}\in k\langle x\rangle$ such that  $ u_{ic}\equiv {\hat u}_i\  \mbox{modulo} \ (x)^ck[[x]]$,  $v_c=\sum_{i=1}^t  u_{ic}a_i\in T$ and
 $\hat v$ is the limit of  $(v_c)_c$ in the $(x)$-adic topology.
\end{Proposition}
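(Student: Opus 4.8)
The plan is to reduce Proposition \ref{p1} to the case $e=2$ treated (essentially) by Proposition \ref{p}, and then iterate. The key observation is that a nested membership condition across blocks $k[[x_1,\dots,x_{r_1}]]^{p_1}\times\dots\times k[[x_1,\dots,x_{r_e}]]^{p_e}$ can be peeled off one variable-block at a time, so the whole statement follows by induction on $e$ once the two-block case is understood. First I would record the base case $e=1$: there $T=M\cap k\langle x_1,\dots,x_{r_1}\rangle^{p}$ and ${\hat T}=(k[[x]]M)\cap k[[x_1,\dots,x_{r_1}]]^{p}$, which is exactly the content of Proposition \ref{p} after renaming variables so that $x_1,\dots,x_{r_1}$ play the role of the ``$x$'' in that proposition and $x_{r_1+1},\dots,x_n$ play the role of the ``$y$''. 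This already delivers not only density but the explicit approximation of the coefficients ${\hat u}_i$ that the ``Moreover'' part demands.

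For the inductive step, suppose the result holds for $e-1$ blocks and consider ${\hat v}=({\hat v}^{(1)},\dots,{\hat v}^{(e)})\in{\hat T}$ with ${\hat v}=\sum_i {\hat u}_i a_i$. I would split the variable set at $r_e$, writing the condition on the last block ${\hat v}^{(e)}\in k[[x_1,\dots,x_{r_e}]]^{p_e}$ as a single application of Proposition \ref{p}: regard $x'=(x_1,\dots,x_{r_e})$ as the ``nested'' variables and $x''=(x_{r_e+1},\dots,x_n)$ as the auxiliary variables, so that Proposition \ref{p} produces algebraic power series coefficients $u_{ic}\in k\langle x\rangle$ with $u_{ic}\equiv{\hat u}_i$ modulo $(x)^c$ and with the combination lying in $M\cap k\langle x_1,\dots,x_{r_e}\rangle^{\bullet}$ in the relevant coordinates. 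The remaining, tighter constraints on the earlier blocks (that the $j$-th block lie in $k[[x_1,\dots,x_{r_j}]]^{p_j}$ for $j<e$) are then handled by feeding the output into the inductive hypothesis applied to the module generated by the $a_i$ over the smaller variable range; because the approximations are compatible modulo arbitrarily high powers of $(x)$, one can choose a single $c$ large enough that all nested conditions are met simultaneously.

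The main obstacle I anticipate is \emph{simultaneity}: a naive iteration approximates the last block first, then re-approximates to fix the second-to-last block, and so on, but each re-approximation can destroy the membership gained in the previous step. The clean way around this is not to iterate the approximations but to iterate at the level of the \emph{systems of equations}. That is, I would encode every nested condition ``coordinate in block $j$ lies in $k\langle x_1,\dots,x_{r_j}\rangle$'' via the representation-by-algebraic-data furnished by Lemma \ref{am}, exactly as in the proof of Proposition \ref{p}: each ${\tilde u}_i$ becomes $g_i({\hat z})$ for suitable $g_i\in k\langle x_{\le r_j}, Z\rangle$ and ${\hat z}$ supported on the appropriate variables, and the vanishing of the ``wrong'' monomials in $x_{r_j+1},\dots,x_n$ for block $j$ becomes an (infinite, hence by Noetherianity finite) polynomial system in the unknowns $Z$ and $V$. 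Assembling all $e$ blocks gives one finite polynomial system over $k\langle x\rangle$ whose formal solution is $({\hat z},{\hat v})$; a single application of the Artin approximation property of $k\langle x\rangle$ then yields, for each $c$, a genuine algebraic solution $(z_c,v_c)$ with $v_c\equiv{\hat v}$ modulo $(x)^c$ and $v_c\in T$, together with the required coefficient approximation $u_{ic}=g_i(z_c)\equiv{\hat u}_i$. This single-system strategy sidesteps the simultaneity problem entirely and keeps the argument essentially as short as the proof of Proposition \ref{p}.
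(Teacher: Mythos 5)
Your first instinct --- induction on $e$, peeling off one block at a time --- is the right one and is in fact what the paper does; but you then abandon it in favour of a ``single global system'' strategy, and that replacement has a genuine gap. The point of Lemma \ref{am} and of the system ($*$) in the proof of Proposition \ref{p} is that the auxiliary data $\hat z$ lives in $k[[x']]^s$ for the \emph{inner} variables $x'$, while the $g_i$ involve only the outer variables and $Z$; the implication ``vanishing of the wrong outer-monomial coefficients of $\sum_i a_ig_i(z)$ forces $\sum_i a_ig_i(z)\in k\langle x'\rangle^p$'' is valid only when the substituted $z$ does not involve the outer variables. With $e$ blocks you need $e$ different splittings of the variable set, hence auxiliary unknowns $Z^{(j)}$ whose formal values lie in $k[[x_1,\ldots,x_{r_j}]]$ for each $j$. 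A single application of the classical Artin approximation over $k\langle x\rangle$, as you propose, returns approximants $z_c^{(j)}$ depending on \emph{all} of $x$; for every block $j$ with $r_j<n$ the coefficient-vanishing equations then no longer control the expansion of block $j$ in $x_{r_j+1},\ldots,x_n$, and membership of $v_c$ in $T$ is lost. Forcing $z_c^{(j)}\in k\langle x_1,\ldots,x_{r_j}\rangle^{s_j}$ instead is itself a nested approximation problem, so the argument becomes circular. Your fallback remark that ``one can choose a single $c$ large enough that all nested conditions are met simultaneously'' is also false: membership in the subring $k\langle x_1,\ldots,x_{r_j}\rangle$ is not an open condition in the $(x)$-adic topology, so no congruence modulo $(x)^c$ can enforce it.

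The simultaneity obstacle you correctly identify is resolved in the paper not by a global system but by the strengthened induction hypothesis, namely the ``Moreover'' clause you already have from Proposition \ref{p}: it supplies a \emph{single} set of coefficients $u_{ic}\in k\langle x\rangle$, congruent to $\hat u_i$ modulo $(x)^ck[[x]]$, such that $\sum_i u_{ic}\,q(a_i)$ already satisfies all the nested conditions of the first $e-1$ blocks at once. One first reduces to $r_e=n$ (replacing $M$ by $M\cap k\langle x_1,\ldots,x_{r_e}\rangle^p$, which is legitimate by Proposition \ref{p}), applies the inductive hypothesis to the projection $M_1=q(M)$ onto the first $e-1$ blocks to obtain these $u_{ic}$, and then observes that the last block $\sum_i u_{ic}\,q'(a_i)$ lies in $k\langle x_1,\ldots,x_{r_e}\rangle^{p_e}=k\langle x\rangle^{p_e}$ automatically. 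No re-approximation ever takes place, so nothing gets destroyed. In short: keep your inductive frame, discard the global system, and let the ``Moreover'' statement carry the induction.
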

\begin{proof}  Apply induction on $e$, the case $e=1$ being
done in Proposition \ref{p}.  Assume that $e>1$. We may reduce to the case when $r_e=n$ replacing $M$ by $M\cap k\langle x_1,\ldots,x_{r_e}\rangle^p$ if $r_e<n$. Let
$$q:\Pi_{i=1}^{p_e}k[[x_1,\ldots,x_{r_i}]]^{p_i}\to \Pi_{i=1}^{p_{e-1}}k[[x_1,\ldots,x_{r_i}]]^{p_i},$$
$$q':\Pi_{i=1}^{p_e}k[[x_1,\ldots,x_{r_i}]]^{p_i}\to   k[[x_1,\ldots,x_{r_e}]]^{p_e}$$
be the canonical projections, ${\hat v}=({\hat v_1},\ldots, {\hat v}_p)\in {\hat T},$ and $M_1=q(M)$.
Assume that  ${\hat v}=\sum_{i=1}^t  {\hat u}_ia_i$ for some ${\hat u}_i\in k[[x]]$, $a_i\in M$.
By induction hypothesis applied to $M_1$, $q(\hat v)$  given $c\in \bf N$ there exists  $ u_{ic}\in k\langle x\rangle$ with $u_{ic}\equiv {\hat u}_i\ \mbox{modulo}\ (x)^ck[[x]]$
 such that $v'_c=\sum_{i=1}^t  u_{ic}q(a_i)\in q(T)$  and $q({\hat v})$ is  the limit of  $(v'_c)_c$ in the $(x)$-adic topology.

Now,  let $v''_c=\sum_{i=1}^t u_{ic}q'(a_i)\in k\langle x_1,\ldots,x_n\rangle^{p_e} $. We have $v''_c\equiv q'({\hat v})\\
 \mbox{modulo}\ (x)^ck[[x]]^{p_e}$. Then  $v_c=(v'_c,v''_c)=\sum_{i=1}^t u_{ic}a_i\in T$, $v_c\equiv {\hat v}\ \mbox{modulo} \
  (x)^ck[[x]]^{p}$  and $\hat v$ is the limit of  $(v_c)_c$ in the $(x)$-adic topology.
\hfill\ \end{proof}

\begin{Corollary} \label{lnes} Theorem \ref{nes} holds when $f$ is linear.
\end{Corollary}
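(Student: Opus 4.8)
The plan is to derive the linear case of Theorem \ref{nes} from Proposition \ref{p1}, the only genuine point requiring care being the inhomogeneous (constant) part of the system, which I handle by an augmentation-and-rescaling device. Since $f$ is linear I first write the system $f(Y)=0$ in matrix form $AY=b$ with $A=(a_{ji})\in k\langle x\rangle^{s\times m}$ and $b=(b_j)\in k\langle x\rangle^{s}$. Next I pass from the non-strict chain $r_1\le\cdots\le r_m$ of Theorem \ref{nes} to the strict chain required by Proposition \ref{p1}: if $s_1<\cdots<s_e$ are the distinct values occurring among the $r_i$ and $p_j=\#\{i:r_i=s_j\}$, then the nested condition $y_i\in k\langle x_1,\ldots,x_{r_i}\rangle$ is exactly the statement that the vector $(y_1,\ldots,y_m)$ lies in the product $\prod_{j=1}^{e} k\langle x_1,\ldots,x_{s_j}\rangle^{p_j}$, which is the format of Proposition \ref{p1}.

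The central device is the augmented kernel module. I would set
$$M=\Ker\big([A\mid -b]\colon k\langle x\rangle^{m+1}\To k\langle x\rangle^{s}\big),$$
a finitely generated $k\langle x\rangle$-submodule of $k\langle x\rangle^{m+1}$ because $k\langle x\rangle$ is Noetherian. Since $k\langle x\rangle\to k[[x]]$ is flat, the kernel commutes with this base change, so $\Ker_{k[[x]]}[A\mid -b]=k[[x]]M$. I then put $\hat v=(\hat y,1)\in k[[x]]^{m+1}$; from $A\hat y-b\cdot 1=0$ I get $\hat v\in k[[x]]M$, and since each $\hat y_i\in k[[x_1,\ldots,x_{r_i}]]$ while the last entry $1$ is a constant (hence lies in $k\langle x_1,\ldots,x_{s_1}\rangle$), the vector $\hat v$ belongs to the formal nested product $\hat T$ of Proposition \ref{p1}. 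Here I attach the extra coordinate to the first block, i.e. I reorder the $m+1$ coordinates so they are grouped by the value of $r_i$, place $t$ in the $s_1$-block, and replace $p_1$ by $p_1+1$ (so now $p=m+1$); all blocks stay nonempty and the chain stays strict.

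Now I apply the density statement of Proposition \ref{p1}: for the given $c$ (I may assume $c\ge 1$) there is $v_c=(y^{(c)},t^{(c)})\in T=M\cap(\text{nested product})$ with $v_c\equiv\hat v\ \mbox{mod}\ (x)^{c}$. Membership $v_c\in M$ reads $Ay^{(c)}=b\,t^{(c)}$; the nested condition gives $y^{(c)}_i\in k\langle x_1,\ldots,x_{r_i}\rangle$ and $t^{(c)}\in k\langle x_1,\ldots,x_{s_1}\rangle$; and $t^{(c)}\equiv 1\ \mbox{mod}\ (x)^{c}$ forces $t^{(c)}$ to be a unit of the local ring $k\langle x_1,\ldots,x_{s_1}\rangle$. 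The final step is to set $y_i=y^{(c)}_i/t^{(c)}$: because $s_1=r_1\le r_i$ the unit $t^{(c)}$ already lies in $k\langle x_1,\ldots,x_{r_i}\rangle$, so $y_i\in k\langle x_1,\ldots,x_{r_i}\rangle$; dividing $Ay^{(c)}=b\,t^{(c)}$ by $t^{(c)}$ yields $Ay=b$, a genuine solution; and $y^{(c)}\equiv\hat y$, $t^{(c)}\equiv 1$ modulo $(x)^{c}$ give $y\equiv\hat y\ \mbox{mod}\ (x)^{c}$, completing the proof.

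The main obstacle is exactly the inhomogeneous term $b$: Proposition \ref{p1} concerns a submodule (a linear object), whereas the solution set $\{y:Ay=b\}$ is merely an affine translate. Carrying the constant $1$ as an extra coordinate in the \emph{smallest} nested block $k\langle x_1,\ldots,x_{r_1}\rangle$, and then dividing through by the resulting near-unit $t^{(c)}$, is what converts the affine problem into the linear framework of Proposition \ref{p1} while simultaneously preserving the nested structure (this is where the ordering $r_1\le\cdots\le r_m$ is essential) and the congruence modulo $(x)^{c}$.
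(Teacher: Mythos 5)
Your proof is correct and follows essentially the same route as the paper: the paper also homogenizes the inhomogeneous system by adjoining an extra unknown $Y_0$ (your $t$) attached to the smallest nested block with $r_0=r_1$, applies Proposition \ref{p1} to the module of solutions of the homogenized system (your kernel of $[A\mid -b]$), and divides by the resulting unit $y_0\equiv 1$. Your explicit flatness remark justifying $\hat v\in k[[x]]M$ is a welcome detail the paper leaves implicit, but the argument is the same.
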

\begin{proof} If $f$ is homogeneous then it is enough
to apply Proposition \ref{p1} for the module $M$ of the solutions of $f$ in $A=k\langle x\rangle$. Suppose that $f$ is not homogeneous, let us say $f$ has the form $g+a_0$ for some system of linear homogeneous polynomials $g\in A[Y]^s$ and $a_0\in A^s$. The proof in this case follows  \cite[page 7]{Rond1}  and we give it here only for the sake of  completeness. Change $f$ by the homogeneous system of linear polynomials ${\bar f}=g+a_0Y_0$ from $A[Y_0,Y]^s$. A nested formal solution $\hat y$ of $\bar f$ in $k[[x]]$ with  ${\hat y}_i\in K[[x_1,\ldots,x_{r_i}]]$, $1\leq i\leq m$ induces a nested formal solution $({\hat y}_0,{\hat y}) $, ${\hat y}_0=1$ of $\bar f$ with $r_0=r_1$. As above, for all $c\in \bf N$ we get a nested algebraic   solution $(y_0,y)$ of $\bar f$ with $y_i\in k\langle x_1,\ldots,x_{r_i}\rangle$ and $y_i\equiv {\hat y}_i\ \mbox{modulo}\ (x)^ck[[x]]$ for all $0\leq i\leq m$. It follows that $y_0$ is invertible and clearly $y_0^{-1}y$ is the wanted  nested algebraic solution of $f$.
\hfill\ \end{proof}
\vskip 0.5 cm


\begin{thebibliography}{99}
\bibitem{A} M.\ Artin, {\em Algebraic approximation of structures over complete local rings}, Publ. Math. IHES, {\bf 36} (1969), 23-58.
\bibitem{Rond1} F. -J., Castro-Jim\'enez, G.\ Rond, {\em Linear nested Artin approximation theorem for algebraic power series}, arXiv:AC/1511.09275.

\bibitem{K}  H.\ Kurke,  T.\ Mostowski, G.\ Pfister, D.\ Popescu, M.\ Roczen,  {\em Die Approximationseigenschaft
lokaler Ringe}, Springer Lect. Notes in Math., {\bf 634}, Springer-Verlag, Berlin-New
York, (1978).

 \bibitem{P} D.\ Popescu, {\em General Neron Desingularization and approximation}, Nagoya Math. J., {\bf 104} (1986), 85-115.
\bibitem{P1} D.\ Popescu, {\em Artin Approximation}, in "Handbook of Algebra", vol. 2, Ed. M. Hazewinkel, Elsevier, 2000, 321-355.
\bibitem{P2} D.\ Popescu, {\em Artin approximation property and the General Neron Desingularization},  arXiv:AC/1511.06967.
\bibitem{Ra} M.\ Raynaud, {\em Anneaux locaux hens\'eliens}, Springer Lect. Notes, Berlin, 1970.
\bibitem{Rond} G.\ Rond,  {\em Artin approximation}, arXiv:AC/1506.04717.

\bibitem{S} R.\ Swan, {\em N\'eron-Popescu desingularization}, in "Algebra and Geometry", Ed. M. Kang, International Press, Cambridge, (1998), 135-192.

\end{thebibliography}
\end{document}